\title{Asymptotic upper bounds on progression-free sets in $\Z_p^n$}
\author{Dion Gijswijt}
\newcommand*{\Z}{\mathbb{Z}}
\newcommand*{\F}{\mathbb{F}}
\newcommand*{\N}{\mathbb{N}}
\newcommand*{\transp}{\mathsf{T}}
\newcommand*{\nom}[3]{\left(\begin{smallmatrix}#1\\#2\end{smallmatrix}\right)_{#3}}
\DeclareMathOperator{\rank}{rank}
\DeclareMathOperator{\linspan}{span}
\newtheorem{theorem}{Theorem}
\newtheorem{lemma}{Lemma}
\newtheorem{proposition}{Proposition}
\newtheorem{corollary}{Corollary}
\newtheorem{remark}{Remark}
\begin{document}
\maketitle
\begin{abstract}We show that any subset of $\Z_p^n$ ($p$ an odd prime) without $3$-term arithmetic progression has size $O(p^{cn})$, where $c:=1-\frac{1}{18\log p}<1$. In particular, we find an upper bound of $O(2.84^n)$ on the maximum size of an affine cap in $GF(3)^n$.
\end{abstract}
\section*{Introduction}
Given an abelian group $G$, a subset $A\subseteq G$ is \emph{progression-free} if there are no disctinct $a,b,c\in A$ for which $a+b=2c$.  In their recent paper \cite{CLP}, Croot, Lev and Pach used the polynomial method to show an upper bound of $O(4^{0.926\cdot n})$ on the size of progression-free sets in $\Z_4^n$. In this paper, we extend their method to progression-free sets in $\Z_p^n$, where $p$ is an odd prime. This improves the bound $O(\frac{p^n}{n})$ of Meshulam \cite{Meshulam} and the bound $O(\frac{3^n}{n^{1+\epsilon}})$ (where $\epsilon >0$ is a constant) in the case $p=3$ due to Bateman and Katz \cite{Bateman}.

\begin{remark}
While submitting the paper, the author was informed that Jordan S. Ellenberg proved a similar result \cite{Ellenberg} three days ago. In their paper an upper bound of $O(2.756^n)$ for progression-free subsets of $\Z_3^n$ (and hence affine caps in $\F_3^n$ is proved. The paper also claims that their method gives an upper bound of $O(p^{cn})$ for some $c=c(p)<1$ in the case of $\Z_p^n$.

\noindent \textbf{Update v2:} Since the arguments of our two papers were essentially identical, we decided to publish our solutions as a joint paper \cite{capsets}.
\end{remark}

\section*{Main theorem}
Throughout, $\F=GF(p)$ will be a finite field, where $p$ is an odd prime. We denote by $L_n:=\linspan\{x^\alpha: \alpha\in\{0,1\ldots, p-1\}^n\}$ the linear space of polynomials over $\F$ in $n$ variables in which no variable occurs with exponent more than $p-1$. Here we use the notation $x^\alpha:=x_1^{\alpha_1}x_2^{\alpha_2}\cdots x_n^{\alpha_n}$. Also, we denote $|\alpha|:=\alpha_1+\cdots+\alpha_n$. For $f\in L_n$, we denote by $Z(f):=\{a\in \F^n\mid f(a)=0\}$ the zero set of $f$. For any integer $d\in \{0,\ldots, (p-1)n\}$ we denote by $L_{n,d}$ the subspace of $L_n$ consisting of polynomials of degree at most $d$. Observe that $\dim L_{n,d}+ \dim L_{n,(p-1)n-d-1}=p^n$ since the map $(\alpha_1,\ldots, \alpha_n)\mapsto (p-1-\alpha_1,\ldots, p-1-\alpha_n)$ induces a bijection from the set of monomials in $L_n$ to itself. We will use the following estimate on the dimension of $L_{n,(p-1)n/3}$.

In order to bound the dimension of the subspaces $L_{n,d}$, we use the following inequality.
\begin{theorem}[Hoeffding inequality \cite{Hoeffding}]
Let $X_1,\ldots, X_n$ be independent random variables on $[a_i, b_i]$ and let $S=X_1+\cdots+X_n$. Then 
$$
\Pr(E[S]-S\geq t) \leq \exp\left(-\frac{2t^2}{\sum_{i=1}^n(b_i-a_i)^2}\right).
$$
\end{theorem}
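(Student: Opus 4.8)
The plan is to prove this by the standard exponential-moment (Chernoff) method, which is the natural route for a tail bound on a sum of bounded independent variables. First I would reduce to centered variables: setting $Y_i := E[X_i] - X_i$, each $Y_i$ has mean zero and ranges over an interval of length $b_i - a_i$, while $T := \sum_{i=1}^n Y_i = E[S] - S$, so the target becomes $\Pr(T \geq t) \leq \exp(-2t^2/\sum_i(b_i-a_i)^2)$. For any $s > 0$, Markov's inequality applied to $e^{sT}$ together with independence gives
$$\Pr(T \geq t) = \Pr(e^{sT} \geq e^{st}) \leq e^{-st} E[e^{sT}] = e^{-st}\prod_{i=1}^n E[e^{sY_i}],$$
which reduces everything to controlling the moment generating function of each single centered variable.

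The key step, and the one I expect to be the main obstacle, is the lemma that a mean-zero variable $Y$ supported on an interval $[c,d]$ of length $\ell := d-c$ satisfies $E[e^{sY}] \leq \exp(s^2\ell^2/8)$. I would establish this by convexity: since $y \mapsto e^{sy}$ is convex, on $[c,d]$ it lies below its chord, so $e^{sy} \leq \frac{d-y}{\ell}e^{sc} + \frac{y-c}{\ell}e^{sd}$; taking expectations and using $E[Y]=0$ leaves a function of $s$ alone. Writing this bound as $\exp(\psi(s))$ and substituting $u = s\ell$, the delicate part is to verify that $\psi$, viewed as a function of $u$, satisfies $\psi(0) = \psi'(0) = 0$ and $\psi''(u) \leq \frac14$ everywhere; this second-derivative bound is precisely the fact that the variance of a two-point (Bernoulli) distribution is at most $\frac14$. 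Taylor's theorem with the Lagrange remainder then yields $\psi \leq u^2/8 = s^2\ell^2/8$.

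Finally I would combine the two steps: plugging the lemma into the product gives
$$\Pr(T \geq t) \leq \exp\left(-st + \frac{s^2}{8}\sum_{i=1}^n(b_i-a_i)^2\right),$$
a quadratic in $s$ that I would minimize by choosing $s = 4t/\sum_i(b_i-a_i)^2 > 0$. Substituting this optimal value collapses the exponent to $-2t^2/\sum_i(b_i-a_i)^2$, which is exactly the claimed bound. The only genuine subtlety is the variance estimate $\psi'' \leq \frac14$ inside the moment-generating-function lemma; the centering reduction, the Chernoff bound, and the optimization over $s$ are all routine.
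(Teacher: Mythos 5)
Your proof is correct, but there is nothing in the paper to compare it against: the paper states Hoeffding's inequality as an external, cited result (Theorem 1, attributed to \cite{Hoeffding}) and gives no proof of it, using it only as a black box to bound $\dim L_{n,(p-1)n/3}$ in Lemma \ref{entropy}. Your argument is the standard exponential-moment proof --- centering, the Chernoff--Markov step $\Pr(T\geq t)\leq e^{-st}\prod_i E[e^{sY_i}]$, Hoeffding's lemma $E[e^{sY}]\leq \exp(s^2\ell^2/8)$ via convexity and the bound $\psi''\leq\tfrac14$, and optimization at $s=4t/\sum_i(b_i-a_i)^2$ --- and all of these steps, including the Taylor-remainder verification of the moment-generating-function lemma, are carried out correctly; this is essentially Hoeffding's original argument, so the citation is fully justified by your proof.
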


\begin{lemma}\label{entropy}
Let $c:=1-\frac{1}{18\log p}<1$. For $n$ a positive multiple of $3$, we have $\dim L_{n,(p-1)n/3}\leq p^{cn}$.
\end{lemma}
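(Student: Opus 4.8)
The plan is to reinterpret the dimension combinatorially and then control it with the tail bound. Since $L_{n,d}$ is spanned by the monomials $x^\alpha$ with $\alpha\in\{0,1,\ldots,p-1\}^n$ and $|\alpha|\le d$, and distinct monomials are linearly independent, we have
$$
\dim L_{n,d}=\#\{\alpha\in\{0,1,\ldots,p-1\}^n : |\alpha|\le d\}.
$$
To bring in the Hoeffding inequality, I would draw $\alpha=(\alpha_1,\ldots,\alpha_n)$ uniformly at random from $\{0,1,\ldots,p-1\}^n$, so that the coordinates $X_i:=\alpha_i$ are i.i.d.\ uniform on $\{0,1,\ldots,p-1\}$. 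Writing $S=X_1+\cdots+X_n=|\alpha|$, the count above equals $p^n\cdot\Pr(S\le d)$, and the task reduces to bounding the lower tail probability $\Pr\big(S\le (p-1)n/3\big)$.

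Next I would compute the relevant parameters. Each $X_i$ has mean $E[X_i]=\tfrac1p\sum_{j=0}^{p-1}j=\tfrac{p-1}{2}$, so $E[S]=\tfrac{(p-1)n}{2}$. Taking $d=(p-1)n/3$, which is an integer since $3\mid n$, the deviation from the mean is $t:=E[S]-d=\tfrac{(p-1)n}{6}>0$, and the event $\{S\le d\}$ is exactly $\{E[S]-S\ge t\}$. Each $X_i$ is supported on $[0,p-1]$, so in the Hoeffding inequality I take $a_i=0$ and $b_i=p-1$, giving $\sum_{i=1}^n(b_i-a_i)^2=n(p-1)^2$.

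Plugging in, the exponent simplifies cleanly: $\frac{2t^2}{n(p-1)^2}=\frac{2}{n(p-1)^2}\cdot\frac{(p-1)^2n^2}{36}=\frac{n}{18}$, so Hoeffding yields $\Pr(S\le d)\le e^{-n/18}$ and hence $\dim L_{n,(p-1)n/3}\le p^n e^{-n/18}$. Finally I would match this against the target: since $p^{cn}=p^n\cdot p^{-n/(18\log p)}=p^n\cdot e^{-n/18}$ (using $\log=\ln$), the claimed bound follows, in fact with the estimate being sharp at this step. There is no genuine obstacle here; the only points needing a moment's care are verifying that the two expressions in $p^{-n/(18\log p)}=e^{-n/18}$ agree (i.e.\ that $\log$ denotes the natural logarithm) and that $(p-1)n/3$ lies in the admissible range $\{0,\ldots,(p-1)n\}$, both of which are immediate.
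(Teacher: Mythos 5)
Your proposal is correct and follows essentially the same route as the paper: both count monomials $\alpha\in\{0,\ldots,p-1\}^n$ with $|\alpha|\le d$, identify this count as $p^n\Pr(S\le d)$ for $S$ a sum of i.i.d.\ uniform variables on $\{0,\ldots,p-1\}$, and apply Hoeffding's inequality with deviation $t=\tfrac{1}{6}(p-1)n$ to get the factor $e^{-n/18}$. The only difference is that you spell out the Hoeffding parameters $a_i=0$, $b_i=p-1$ and the exponent computation explicitly, which the paper leaves implicit.
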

\begin{proof}
Denote by $\nom{n}{k}{p-1}:=|\{a\in \{0,1,\ldots, p-1\}^n: |a|=k\}|$ the extended binomial coefficients (see e.g. \cite{Fahssi}). So we have $\dim L_{n,d}=\sum_{k=0}^d \nom{n}{k}{p-1}$. Let $X_1,\ldots, X_n$ be i.i.d. random variables with $\Pr[X_i=t]=\tfrac{1}{p}$ for $t=0,\ldots, p-1$. Let $S:=X_1+\cdots+X_n$. It is easy to see that $\nom{n}{k}{p-1}=p^n\Pr[X=k]$. The expected value of $S$ equals $\tfrac{1}{2}(p-1)n$.

By Hoeffding's inequality, we have 
$$
\Pr[S\leq\tfrac{1}{3}(p-1)n]=\Pr[S\leq\tfrac{1}{2}(p-1)n-\tfrac{1}{6}(p-1)n]\leq e^{-\frac{1}{18}n}.
$$ 
It follows that 
$$
\dim L_{n,(p-1)n/3}\leq p^n\cdot e^{-\frac{1}{18}n}=p^{n\cdot (1-\frac{1}{18\log p})}=p^{cn}.
$$
\end{proof}

\begin{proposition}\label{bijection}
The evaluation map $\phi:L_n\to \F^{\F^n}$ given by $\phi(f)=(f(a))_{a\in \F^n}$ is a linear bijection. 
\end{proposition}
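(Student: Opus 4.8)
The plan is as follows. Linearity of $\phi$ is immediate, since evaluation at each fixed point $a\in\F^n$ is a linear functional on $L_n$; so I focus entirely on bijectivity. My first move is to reduce bijectivity to a single injectivity (equivalently, surjectivity) statement by a dimension count. The domain $L_n$ is by definition spanned by the $p^n$ monomials $x^\alpha$ with $\alpha\in\{0,\ldots,p-1\}^n$, and these are linearly independent as formal polynomials, so $\dim L_n=p^n$. The codomain $\F^{\F^n}$ is the space of all functions $\F^n\to\F$, of dimension $|\F^n|=p^n$. Since the two spaces have equal finite dimension, it suffices to prove that $\phi$ is surjective.

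To establish surjectivity I would exhibit a preimage for each standard basis function of $\F^{\F^n}$, namely the indicator $\one_a$ (value $1$ at $a$, value $0$ elsewhere) for $a\in\F^n$. The natural candidate is the Lagrange-type product
\[
\delta_a := \prod_{i=1}^{n}\prod_{\substack{t\in\F\\ t\neq a_i}} \frac{x_i - t}{a_i - t}.
\]
Two things need checking. First, $\delta_a\in L_n$: in each variable $x_i$ the inner product runs over the $p-1$ elements $t\neq a_i$, so $x_i$ occurs with exponent at most $p-1$, and each scalar $(a_i-t)^{-1}$ is well defined because $a_i-t\neq 0$. Second, $\phi(\delta_a)=\one_a$: evaluating at $b\in\F^n$, the $i$-th inner product equals $\prod_{t\neq a_i}(b_i-t)(a_i-t)^{-1}$, which is $1$ when $b_i=a_i$ and is $0$ when $b_i\neq a_i$, since then $b_i$ itself appears among the values $t$ and contributes a vanishing factor. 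Hence the product over $i$ is $1$ if $b=a$ and $0$ otherwise. As the functions $\one_a$ span $\F^{\F^n}$, surjectivity follows, and the dimension count upgrades it to a bijection.

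The only genuinely delicate point is that this interpolation argument is specific to the hypotheses $|\F|=p$ and exponents bounded by $p-1$: it is precisely because each variable ranges over all $p$ field values while the representing polynomial has degree at most $p-1$ in that variable that a monomial basis of size $p^n$ can surject onto the $p^n$-dimensional function space. An equivalent route would prove injectivity directly: if $f\in L_n$ vanishes on all of $\F^n$, write $f=\sum_{j=0}^{p-1} f_j x_n^j$ with each $f_j$ depending only on $x_1,\ldots,x_{n-1}$; fixing the first $n-1$ coordinates and using that a univariate polynomial of degree at most $p-1$ with $p$ roots is zero forces every $f_j$ to vanish identically, after which induction on $n$ gives $f=0$. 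I would present the surjectivity version, since the explicit formula for $\delta_a$ reduces the argument to one transparent verification.
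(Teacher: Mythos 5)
Your proof is correct, but it takes the opposite route from the paper. The paper proves \emph{injectivity}: it writes $f=f_0+x_nf_1+\cdots+x_n^{p-1}f_{p-1}$ with $f_i\in L_{n-1}$, uses the fact that a nonzero univariate polynomial of degree at most $p-1$ cannot vanish at all $p$ points of $\F$ to conclude each $f_i$ vanishes everywhere, and finishes by induction on $n$; the dimension count $\dim L_n=p^n=\dim\F^{\F^n}$ then upgrades injectivity to bijectivity. You instead prove \emph{surjectivity}, exhibiting for each $a\in\F^n$ the explicit Lagrange-type preimage $\delta_a=\prod_i\prod_{t\neq a_i}(x_i-t)/(a_i-t)$ of the indicator function $\one_a$, and use the same dimension count in the other direction. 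The two arguments rest on the same univariate fact viewed from dual sides (root counting versus interpolation), and both are complete. Your version has the advantage of being constructive --- it hands you polynomial representatives of the indicator functions, which makes the later uses of this proposition (e.g.\ that $K$ has dimension $|C|$, and that one can pick $f\in V$ with prescribed values on $C'$ in the proof of the main theorem) feel concrete rather than abstract; the paper's inductive argument is shorter to write and avoids introducing the interpolation formula, at the cost of being purely existential. The closing remark in your proposal correctly identifies the injectivity route as the alternative --- that alternative is precisely the paper's proof.
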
 
\begin{proof}
The fact that $\phi$ is linear is clear. Since $\dim L_n=\F^n=\dim \F^{\F^n}$, it suffices to show that $\phi$ is injective. We will show this by induction on $n$. If $n=1$, this follows since a nonzero polynomial $f=c_0+c_1x_1+\cdots+c_{p-1}x_1^{p-1}$ has at most $p-1<p$ roots in $\F$. 

Now let $n\geq 2$ and let $f\in L_n$ be such that $Z(f)=\F^n$. We need to show that $f=0$. Write $f=f_0+x_nf_1+x_n^2f_2+\cdots+x_n^{p-1}f_{p-1}$, where $f_0,\ldots, f_{q-1}\in L_{n-1}$. Observe that for any $a_1,\ldots, a_{n-1}\in \F$ the univariate polynomial $g(x_n):=\sum_{i=0}^{p-1} x_n^i\cdot f_i(a_1,\ldots, a_{n-1})$ evaluates to zero on the whole of $\F$ and therefore is the zero polynomial. That is, for all $a_1,\ldots, a_{n-1}$ and all $i=0,\ldots, p-1$ we have $f_i(a_1,\ldots, a_{n-1})=0$. By induction it follows that $f_i=0$ for $i=0,\ldots, p-1$ and hence that $f=0$.   
\end{proof}

\begin{lemma}\label{rank}
Let $g=\sum_{\alpha,\beta}C_{\alpha,\beta}\, x^\alpha y^\beta\in \F[x_1,\ldots,x_n,y_1,\ldots, y_m]$, where $C\in \F^{\N^n\times\N^m}$. Let $A\subseteq \F^n$ and $B\subseteq \F^m$. Define the matrix $M\in \F^{A\times B}$ by $M_{ab}:=g(a,b)$. Then $\rank M\leq \rank C$.
\end{lemma}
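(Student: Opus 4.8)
The plan is to reduce the statement to the standard fact that a matrix of rank $r$ is a sum of $r$ rank-one matrices, and to carry that decomposition from the coefficient array $C$ over to the evaluation matrix $M$ using the product structure of $g$. First I would observe that, although $C$ is indexed by the infinite sets $\N^n$ and $\N^m$, only finitely many entries $C_{\alpha,\beta}$ are nonzero because $g$ is a polynomial; hence $r:=\rank C$ is a well-defined finite number. I would then take a rank factorization of $C$, writing $C_{\alpha,\beta}=\sum_{k=1}^r (u_k)_\alpha (v_k)_\beta$ for suitable finitely-supported vectors $u_k\in\F^{\N^n}$ and $v_k\in\F^{\N^m}$ spanning the row and column spaces of $C$.

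Next I would substitute this factorization into the definition of $g$ and interchange the (finite) sums. Grouping the $x$-dependent and $y$-dependent factors yields
$$g(x,y)=\sum_{k=1}^r\Big(\sum_\alpha (u_k)_\alpha x^\alpha\Big)\Big(\sum_\beta (v_k)_\beta y^\beta\Big)=\sum_{k=1}^r p_k(x)\,q_k(y),$$
where $p_k\in\F[x_1,\ldots,x_n]$ and $q_k\in\F[y_1,\ldots,y_m]$. In other words, the rank factorization of $C$ is precisely an expression of $g$ as a sum of $r$ products of a polynomial in the $x$-variables with a polynomial in the $y$-variables.

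Finally I would evaluate at points $a\in A$ and $b\in B$. Since $M_{ab}=g(a,b)=\sum_{k=1}^r p_k(a)\,q_k(b)$, the matrix decomposes as $M=\sum_{k=1}^r P_k Q_k^\transp$, where $P_k:=(p_k(a))_{a\in A}\in\F^A$ and $Q_k:=(q_k(b))_{b\in B}\in\F^B$. Each term $P_k Q_k^\transp$ has rank at most $1$, so by subadditivity of the rank we get $\rank M\le r=\rank C$, as claimed. I do not anticipate a genuine obstacle: the only point needing a little care is the bookkeeping around the infinite index sets, which the finite-support observation disposes of. The conceptual content is simply that a rank-one (tensor) decomposition of the coefficient array survives evaluation at the points of $A\times B$.
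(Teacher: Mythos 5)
Your proof is correct, and it reaches the conclusion by a genuinely different (though closely related) mechanism than the paper's. The paper performs no decomposition of $C$ at all: it introduces the monomial evaluation matrices $M_A\in\F^{\N^n\times A}$ and $M_B\in\F^{\N^m\times B}$, given by $(M_A)_{\alpha,a}:=a^\alpha$ and $(M_B)_{\beta,b}:=b^\beta$, checks the single identity $M=M_A^\transp C M_B$, and concludes by submultiplicativity of rank under matrix products. You instead invoke the characterization of $\rank C$ as the least number of rank-one summands: a rank factorization $C_{\alpha,\beta}=\sum_{k=1}^r(u_k)_\alpha(v_k)_\beta$ becomes, after substitution and regrouping, a splitting $g(x,y)=\sum_{k=1}^r p_k(x)\,q_k(y)$, and evaluation turns this into $M=\sum_{k=1}^r P_kQ_k^\transp$, a sum of $r$ rank-one matrices. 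The two arguments are dual to one another: composing the paper's identity with a rank factorization $C=UV^\transp$ gives exactly your decomposition, since $M=(M_A^\transp U)(V^\transp M_B)$, and conversely your $p_k$, $q_k$ are the columns of $M_A^\transp U$ and $M_B^\transp V$ read as polynomials. What your route buys is transparency: it isolates the real content of the lemma, namely that $g$ can be written as a sum of $\rank C$ products of a polynomial in the $x$-variables with a polynomial in the $y$-variables, which is how this Croot--Lev--Pach-type lemma is often stated and applied. What it costs is the extra bookkeeping you rightly flag: one must observe that $C$ has finite support so that a finite rank factorization over the infinite index sets $\N^n$, $\N^m$ exists, a point the paper's one-line product identity largely sidesteps (its formally infinite sums collapse to finite ones for the same reason). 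Both proofs are complete and elementary; the paper's is shorter, yours is more self-explanatory.
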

\begin{proof}
Let $M_A\in \F^{\N^n\times A}, M_B\in \F^{\N^m\times B}$ be defined by $(M_A)_{\alpha,a}:=a^\alpha$ and $(M_B)_{\beta, b}:=b^\beta$. It is easy to check that $M:=M_A^\transp C M_B$. Hence, $\rank M\leq \rank C$.
\end{proof}

\begin{proposition}\label{diagonal}
Let $f\in L_{n,2d}$ and let $A\subseteq\F^n$. Suppose that for all $a,b\in A$ we have: $f(a+b)=0$ if and only if $a\neq b$. Then $|A|\leq 2\dim L_{n,d}$. 
\end{proposition}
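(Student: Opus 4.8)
The plan is to convert the hypothesis into a rank statement about a matrix and then control that rank with Lemma~\ref{rank}. First I would form the matrix $M\in\F^{A\times A}$ with entries $M_{ab}:=f(a+b)$. The hypothesis says precisely that $f(a+b)=0$ whenever $a\neq b$ and that $f(2a)=f(a+a)\neq 0$ when $a=b$; thus $M$ is a diagonal matrix all of whose diagonal entries are nonzero. Hence $M$ is invertible and $\rank M=|A|$. The whole proof then reduces to showing $\rank M\leq 2\dim L_{n,d}$.

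Next I would realize $g(x,y):=f(x+y)$ as an explicit polynomial in the $2n$ variables $x_1,\ldots,x_n,y_1,\ldots,y_n$ and read off its coefficient matrix $C$. Since $f\in L_{n,2d}$, each monomial $x^\gamma$ occurring in $f$ satisfies $|\gamma|\leq 2d$ and $\gamma_i\leq p-1$; expanding $(x+y)^\gamma=\prod_i(x_i+y_i)^{\gamma_i}$ yields only terms $x^\alpha y^\beta$ with $\alpha_i+\beta_i=\gamma_i$, so that $\alpha_i,\beta_i\leq p-1$ and $|\alpha|+|\beta|=|\gamma|\leq 2d$. Therefore $g=\sum_{\alpha,\beta}C_{\alpha,\beta}\,x^\alpha y^\beta$ with $C_{\alpha,\beta}=0$ unless $|\alpha|+|\beta|\leq 2d$. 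Applying Lemma~\ref{rank} with $B=A$ gives $\rank M\leq\rank C$, so it remains to bound $\rank C$.

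The crux is the observation that every pair $(\alpha,\beta)$ in the support of $C$ satisfies $|\alpha|\leq d$ or $|\beta|\leq d$, since $|\alpha|+|\beta|\leq 2d$ cannot hold with both exceeding $d$. I would therefore split $C=C'+C''$, where $C'$ retains the entries with $|\alpha|\leq d$ and $C''$ the remaining ones (those with $|\alpha|>d$, which forces $|\beta|\leq d$). The nonzero rows of $C'$ are indexed by the $\alpha\in\{0,\ldots,p-1\}^n$ with $|\alpha|\leq d$, of which there are exactly $\dim L_{n,d}$, so $\rank C'\leq\dim L_{n,d}$; symmetrically the nonzero columns of $C''$ are indexed by $\beta$ with $|\beta|\leq d$, giving $\rank C''\leq\dim L_{n,d}$. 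Hence $\rank C\leq\rank C'+\rank C''\leq 2\dim L_{n,d}$, and combining with the first two paragraphs yields $|A|=\rank M\leq\rank C\leq 2\dim L_{n,d}$.

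None of the individual steps is computationally heavy; the detail to get right is the bookkeeping in the second paragraph — checking that the naive expansion of $f(x+y)$ never produces an exponent exceeding $p-1$, so that no reduction modulo $x_i^p-x_i$ is needed and the degree bound $|\alpha|+|\beta|\leq 2d$ genuinely survives. The one genuinely clever move is the support splitting, but once one notices that $|\alpha|+|\beta|\leq 2d$ forces $\min(|\alpha|,|\beta|)\leq d$, the rank bound falls out immediately.
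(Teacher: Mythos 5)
Your proof is correct and follows essentially the same route as the paper: the same diagonal matrix $M_{ab}=f(a+b)$, the same coefficient matrix $C$ of $g(x,y)=f(x+y)$, and the same appeal to Lemma~\ref{rank} together with the observation that every $(\alpha,\beta)$ in the support of $C$ has $|\alpha|\leq d$ or $|\beta|\leq d$. Your explicit splitting $C=C'+C''$ and the exponent bookkeeping showing no reduction modulo $x_i^p-x_i$ is needed merely spell out details the paper leaves implicit.
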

\begin{proof}
Let $g\in \F[x_1,\ldots,x_n,y_1,\ldots, y_n]$ be defined by $g(x,y):=f(x+y)$. So $g$ has degree at most $2d$. Write $g=\sum_{\alpha,\beta}C_{\alpha,\beta}x^\alpha y^\beta$. Note that $C_{\alpha,\beta}$ is nonzero only if $|\alpha|\leq d$ or $|\beta|\leq d$. It follows that the support of $C$ is contained in the union of the rows indexed by monomials of degree at most $d$ and the columns indexed by monomials of degree at most $d$. Hence, $\rank C\leq 2\dim L_{n,d}$.

On the other hand, the $A\times A$ matrix $M$ defined by $M_{a,b}:=g(a,b)$ is a diagonal matrix with nonzero diagonal elements and therefore has rank $|A|$. By Lemma \ref{rank}, it follows that $|A|=\rank M\leq \rank C\leq 2\dim L_{n,d}$.
\end{proof}

\begin{theorem}[Main theorem]\label{main} Let $c:=1-\frac{1}{18\log p}<1$. For $A\subseteq \F^n$ progression free, we have $|A|=O(p^{cn})$.
\end{theorem}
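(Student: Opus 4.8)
The plan is to detect three-term progressions with a single polynomial in three groups of variables and to bound $|A|$ by its \emph{slice rank}, the natural three-block analogue of the coefficient-rank bound in Lemma~\ref{rank}. It is worth recording first why the two-block Proposition~\ref{diagonal} does not suffice on its own. The indicator polynomial $f$ of $2A$ exists by Proposition~\ref{bijection}, and since $A$ is progression-free and $p$ is odd it satisfies $f(a+b)=0$ iff $a\ne b$ on $A$ (for $a\ne b$ the midpoint $(a+b)/2$ is distinct from $a,b$ and lies outside $A$, whereas $2a\in 2A$). However, this $f$ typically has degree near $(p-1)n$, so Proposition~\ref{diagonal} only gives $|A|\le 2\dim L_{n,d}$ with $d\approx(p-1)n/2$, a trivial bound. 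Splitting the degree budget into three equal parts rather than two is exactly what a third block of variables buys, and it is this split that matches the threshold $(p-1)n/3$ appearing in Lemma~\ref{entropy}.

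I would introduce $F(x,y,z):=\prod_{i=1}^n\bigl(1-(x_i-2y_i+z_i)^{p-1}\bigr)$ in the $3n$ variables $x,y,z\in\F^n$, which equals $1$ on $\{x-2y+z=0\}$ and $0$ elsewhere. Since $p$ is odd and $A$ is progression-free, $a-2b+c=0$ with $a,b,c\in A$ forces $a=b=c$, so the restriction of $F$ to $A^3$ is exactly the diagonal function $[\,a=b=c\,]$. For the upper estimate I expand $F=\sum_{\gamma,\delta,\epsilon}C_{\gamma,\delta,\epsilon}\,x^\gamma y^\delta z^\epsilon$; every monomial has $|\gamma|+|\delta|+|\epsilon|\le(p-1)n$ with all exponents at most $p-1$, hence $\min(|\gamma|,|\delta|,|\epsilon|)\le(p-1)n/3$. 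Assigning each monomial to a block attaining this minimum writes $F|_{A^3}$ as a sum of at most $3\dim L_{n,(p-1)n/3}$ slices, each of the form (reduced monomial of degree $\le(p-1)n/3$ in one block) times (an arbitrary function of the other two blocks). By Lemma~\ref{entropy}, for $n$ divisible by $3$ this count is at most $3p^{cn}$.

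The crux, and the step I expect to be the main obstacle, is the matching lower bound: a function on $A^3$ that is nonzero exactly on the main diagonal has slice rank at least $|A|$. I would prove this by the contraction argument underlying the slice-rank method, which reduces to the matrix-rank idea of Proposition~\ref{diagonal}. Writing such a decomposition as $\sum_i f_i(x)g_i(y,z)$ plus the two analogous families, let $k:=\dim\linspan\{f_i\}\subseteq\F^A$ (at most the number of $x$-slices); a short linear-algebra lemma produces $v\in\F^A$ with $\sum_x v(x)f_i(x)=0$ for all $i$ and $|\mathrm{supp}\,v|\ge|A|-k$. Contracting the identity against $v$ in the $x$-block annihilates the first family and expresses the diagonal matrix $\bigl(v(b)\,[\,b=c\,]\bigr)_{b,c}$, whose rank equals $|\mathrm{supp}\,v|$, as a sum of at most (number of $y$-slices)$+$(number of $z$-slices) rank-one matrices; hence $|A|-k$ is bounded by those two counts and the slice rank is at least $|A|$. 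Combining the two bounds gives $|A|\le 3\dim L_{n,(p-1)n/3}\le 3p^{cn}$ when $3\mid n$. For general $n$ I pad $A$ by zero coordinates up to the next multiple of $3$, changing $n$ by at most $2$ while preserving both progression-freeness and $|A|$, so that $|A|=O(p^{cn})$, as asserted in Theorem~\ref{main}.
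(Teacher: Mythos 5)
Your proof is correct, but it is not the paper's argument: you have rediscovered the symmetric ``slice rank'' reformulation of the method, whereas the paper runs an asymmetric two-block argument and never introduces a three-variable tensor. The paper sets $C:=\{2a:a\in A\}$ and $B:=\{a+b:a,b\in A,\ a\neq b\}$, uses Proposition~\ref{bijection} to see that the space $K$ of polynomials vanishing off $C$ has dimension $|C|$, and intersects $K$ with $L_{n,\frac{2}{3}(p-1)n}$; the dimension count $\dim(K\cap L)\geq |C|-p^{cn}$ produces a single low-degree polynomial $f$ that vanishes on $B$ and equals $1$ on a subset $C'\subseteq C$ of size $\dim(K\cap L)$, and Proposition~\ref{diagonal} applied to this $f$ gives $|C'|\leq 2p^{cn}$, whence $|A|=|C|\leq 3p^{cn}$. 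Your route replaces this polynomial-selection step by two tensor statements: the explicit indicator $F(x,y,z)=\prod_i\bigl(1-(x_i-2y_i+z_i)^{p-1}\bigr)$ plus the degree pigeonhole bounds the slice rank above by $3\dim L_{n,(p-1)n/3}$, and the contraction lemma (a diagonal tensor with nonzero diagonal entries has slice rank equal to the size of its diagonal) bounds it below by $|A|$. Both proofs rest on the same two pillars, Lemma~\ref{entropy} and the rank of a diagonal matrix, but the ingredient you need beyond the paper's toolkit is exactly your ``short linear-algebra lemma'' (a subspace of $\F^A$ of codimension $k$ contains a vector of support at least $|A|-k$); it is true, but it deserves its maximal-support proof written out, since it is the one point where the finite field requires care --- a ``generic linear combination has full support on the pivot columns'' argument is not available over $\F_p$, and the correct proof takes $v$ of maximal support and extends it by a vector vanishing on $\mathrm{supp}\,v$ to reach a contradiction. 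As for what each approach buys: the paper's stays entirely within matrix rank and needs nothing beyond Proposition~\ref{diagonal}, at the cost of the somewhat ad hoc construction of $f$; yours is symmetric in the three variables, needs neither Proposition~\ref{bijection} nor any choice of polynomial, and its diagonal-tensor lemma generalizes readily to other problems of this type, at the cost of proving that lemma. Both arguments give the same constant, $|A|\leq 3p^{cn}$ for $3\mid n$, and your padding step for general $n$ fills in a detail the paper leaves implicit.
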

\begin{proof}
Let $n$ be a multiple of $3$ and let $A\subseteq \F^n$ be progression free. It suffices to show that $|A|\leq 3p^{cn}$.

Define $B:=\{a+b\mid a,b\in A\text{ with $a\neq b$}\}$ and $C:=\{a+a\mid a\in A\}$. Since $A$ is progression-free we have $B\cap C=\emptyset$. Let 
\begin{eqnarray*}
K&:=&\{f\in L_n\mid (\F^n\setminus C)\subseteq Z(f)\},\\
L&:=&L_{n, \tfrac{2}{3}(p-1)n}.
\end{eqnarray*}
Note that $K$ is a linear space of dimension $|C|$ by Proposition \ref{bijection}. By Lemma \ref{entropy},  $L$ is a linear space of dimension 
$$
\dim L\geq p^n-\dim L_{n,\tfrac{1}{3}(p-1)n-1}\geq p^n-p^{cn}.
$$ 
Denote $V:=K\cap L$. We have 
\begin{equation}\label{dimV}
\dim V\geq \dim L+\dim K-p^n\geq |C|-p^{cn}.
\end{equation}
In particular, we may assume that $V$ has positive dimension, for otherwise $|A|=|C|\leq p^{cn}$, and we would be done.

By Proposition \ref{bijection}, we can view $V$ as a linear subspace of $\F^C$. Hence, there is a subset $C'\subseteq C$ of size $\dim V$ such that the evaluation map $\phi: V\to \F^{C'}$ given by $\phi(f):=(f(c))_{c\in C'}$ is surjective. Hence, we can choose $f\in V$ such that $f(c)=1$ for all $c\in C'$.

Let $A':=\{a\in A\mid a+a\in C'\}$. Since $p$ is odd, we have $|A'|=|C'|$. Since $f\in K$, we have $B\subseteq (\F^n\setminus C)\subseteq Z(f)$. This implies that $f(a+b)=0$ for all $a,b\in A'$ distinct. By our choice of $f$ we also have $f(a+a)=1$ for all $a\in A'$. Since $f$ has degree at most $\tfrac{2}{3}(p-1)n$, Proposition \ref{diagonal} implies that $|A'|\leq 2\dim L_{n,\tfrac{1}{3}(p-1)n}=2\dim L$. Hence, $|C'|=|A'|\leq 2p^{cn}$. 
By (\ref{dimV}), we obtain $$|A|=|C|\leq p^{cn}+\dim V=p^{cn}+|C'|\leq 3p^{cn}.$$ 
\end{proof}

In the special case $p=3$, progression-free sets correspond exactly to affine caps. The best known \emph{lower} bound for affine caps in $\F_3^n$ is $\Omega(2.2174^n)$ due to Edel \cite{Edel}. Since $3^{1-\tfrac{1}{18\log 3}}=2.84\cdot$, Theorem \ref{main} implies an \emph{upper} bound of $O(2.84^n)$, improving the previous best upper bound of $O(\frac{3^n}{n^{1+\epsilon}})$ due to Bateman and Katz \cite{Bateman}.
\begin{corollary} 
The maximum size of an affine cap in $\F_3^n$ is $O(2.84^n)$.
\end{corollary}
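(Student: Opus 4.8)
The plan is to deduce this directly from Theorem \ref{main} by specializing to $p=3$, once the notion of an affine cap has been translated into that of a progression-free set. First I would record the combinatorial dictionary: in $\F_3^n$ every affine line consists of exactly three points, and three distinct points $a,b,c$ are collinear precisely when $a+b+c=0$, since the third point on the line through $a$ and $b$ is $-a-b$. Because $2=-1$ in $\F_3$, the relation $a+b+c=0$ is the same as $a+b=2c$ after relabeling. Hence a subset of $\F_3^n$ is an affine cap if and only if it contains no three distinct points in arithmetic progression, i.e.\ if and only if it is progression-free in the sense used throughout the paper.

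With this identification in hand, I would simply apply Theorem \ref{main} with $p=3$. It yields $|A|=O(3^{cn})$ for every affine cap $A\subseteq\F_3^n$, where $c=1-\frac{1}{18\log 3}$.

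It remains to evaluate the base of the exponential. From $c=1-\frac{1}{18\log p}$ one gets $p^{c}=p\cdot p^{-1/(18\log p)}=p\cdot e^{-1/18}$, because $p^{-1/(18\log p)}=e^{-\log p/(18\log p)}=e^{-1/18}$ (here $\log$ is the natural logarithm, consistently with the proof of Lemma \ref{entropy}). Specializing to $p=3$ gives $3^{c}=3\,e^{-1/18}=2.84\ldots$, so $|A|=O(3^{cn})=O(2.84^{n})$, as claimed.

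I do not expect a genuine obstacle here: the content is entirely carried by Theorem \ref{main}, and the only points requiring care are the elementary verification that caps and progression-free sets coincide over $\F_3$, and confirming that the numerical rounding is genuinely an upper bound, i.e.\ that $3\,e^{-1/18}<2.84$ rather than merely being approximated by it.
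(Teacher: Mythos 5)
Your proposal is correct and takes essentially the same route as the paper: identify affine caps in $\F_3^n$ with progression-free sets (via $2=-1$ in $\F_3$, so $a+b=2c$ is the collinearity relation $a+b+c=0$), then specialize Theorem \ref{main} to $p=3$ and compute $3^{c}=3e^{-1/18}\approx 2.838<2.84$. The paper merely asserts the cap/progression-free correspondence and the numerical value, so your explicit verification of both points is the same argument written out in slightly greater detail.
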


\section*{Acknowledgements}
The author would like to thank Jop Bri\"et, Viresh Patel, Guus Regts and Jeroen Zuiddam for very useful discussions on the polynomial method, which ultimately led to the current paper.

\end{document}